\newcommand{\eps}{\varepsilon}
\newcommand{\R}{\mathbb{R}}
\newcommand{\RN}{{\mathbb{R}^N}}
\renewcommand{\le}{\leqslant}
\renewcommand{\ge}{\geqslant}
\newcommand{\n }{\nabla }
\renewcommand{\H}{H^1(\RN)}
\newcommand{\Hr}{H^1_r(\RN)}
\newcommand{\D}{{\mathcal D}}
\newcommand{\irn }{\int_{\RN}}
\def\bbm[#1]{\mbox{\boldmath $#1$}}
\newcommand{\beq }{\begin{equation}}
\newcommand{\eeq }{\end{equation}}
\newtheorem{theorem}{Theorem}[section]
\newtheorem{proposition}[theorem]{Proposition}
\newtheorem{remark}[theorem]{Remark}
\newtheorem{corollary}[theorem]{Corollary}
\renewenvironment{proof}{\noindent{\textbf{Proof\quad}}}{$\hfill\square$\vspace{0.2 cm}\\}
\title{{\bf A note on the elliptic Kirchhoff \\equation in $\R^N$
perturbed by a local nonlinearity
\footnote{The author is supported by M.I.U.R. - P.R.I.N. ``Metodi
variazionali e topologici nello studio di fenomeni non lineari''}}}
\author{A. Azzollini \thanks{Dipartimento di Matematica, Informatica ed Economia, Universit\`a degli
Studi della Basilicata,  Via dell'Ateneo Lucano 10, I-85100 Potenza,
Italy, e-mail: {\tt antonio.azzollini@unibas.it}}}
\date{}
\begin{document}

\maketitle

\begin{abstract}
    In this note we complete the study made in \cite{A} on a Kirchhoff type equation with a Berestycki-Lions
    nonlinearity. We also correct
    Theorem 0.6 inside.
\end{abstract}

\section*{Introduction}

In this note we consider the nonlinear Kirchhoff equation
    \begin{equation}\label{eq:k}
            -\left(a+b\int_{\R^N}|\n u|^2\right)\Delta u = g(u)\hbox{ in }\RN,\; N\ge
            3,
    \end{equation}
where we assume general hypotheses on $g$. We will investigate the
existence of a solution depending on the value of the positive
parameter $a$ and $b$. We will fix an uncorrect sentence contained
in \cite{A} and complete that paper with additional results.\\
We refer to \cite{A} and the references within for a justification
of our study and a bibliography on the problem.

\section{Existence and characterization of the solutions}
Assume that
        \begin{itemize}
\item[({\bf g1})] $g\in C(\R,\R)$, $g(0)=0$;
\item[({\bf g2})]
$-\infty <\liminf_{s\to 0^+} g(s)/s\le \limsup_{s\to 0^+}
g(s)/s=-m<0$;
\item[({\bf g3})] $-\infty \le\limsup_{s\to +\infty}
g(s)/s^{2^*-1}\le 0$;
\item[({\bf g4})] there exists $\zeta>0$
such that $G(\zeta):=\int_0^\zeta g(s)\,d s>0$.
\end{itemize}

It is well known that the previous assumptions coincide with that in
\cite{BL1}, where the problem
    \begin{equation}\label{eq:bl}
            -\Delta v = g(v)\hbox{ in }\RN,\; N\ge
            3,
    \end{equation}
was studied and solved.\\
First of all, we present the following general result which provides
a characterization of the solutions of \eqref{eq:k}
\begin{theorem}\label{th:iff}
    $u\in C^2(\RN)\cap\D^{1,2}(\RN)$ is a solution to \eqref{eq:k} if and only if there exists $v\in C^2(\RN)\cap\D^{1,2}(\RN)$ solution to \eqref{eq:bl} and
$t>0$ such that $t^2a+t^{4-N}b\int_{\R^N}|\n v|^2=1$ and
$u(\cdot)=v(t\cdot)$.
\end{theorem}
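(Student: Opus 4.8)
The plan is to exploit the scaling structure that links the Kirchhoff equation \eqref{eq:k} to the classical Berestycki--Lions problem \eqref{eq:bl}. Let me sketch the proof.

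The plan is to exploit the exact scaling relation between \eqref{eq:k} and \eqref{eq:bl}: the Kirchhoff coefficient $a+b\irn|\n u|^2$ is a \emph{constant} once $u$ is fixed, so \eqref{eq:k} is nothing but a dilation of \eqref{eq:bl}. First I would record how the relevant quantities transform under the ansatz $u(\cdot)=v(t\cdot)$ with $t>0$. A direct change of variables gives $\irn|\n u|^2=t^{2-N}\irn|\n v|^2$ and $\Delta u(x)=t^2(\Delta v)(tx)$, so that the left-hand side of \eqref{eq:k} becomes
\[
-\Big(a+b\,\irn|\n u|^2\Big)\Delta u(x)=-\Big(a t^2+b\,t^{4-N}\irn|\n v|^2\Big)(\Delta v)(tx).
\]
These two identities are the engine of the whole argument.

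For the ``if'' direction, I would assume $v$ solves \eqref{eq:bl}, that $t>0$ satisfies $a t^2+b\,t^{4-N}\irn|\n v|^2=1$, and set $u(\cdot)=v(t\cdot)$. Plugging the constraint into the displayed identity, the Kirchhoff left-hand side at $x$ equals $-(\Delta v)(tx)=g(v(tx))=g(u(x))$, so $u$ solves \eqref{eq:k}; membership in $C^2(\RN)\cap\D^{1,2}(\RN)$ is clearly preserved by dilation.

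For the ``only if'' direction, let $u$ solve \eqref{eq:k} and set $A:=a+b\,\irn|\n u|^2$, which is finite (as $u\in\D^{1,2}(\RN)$) and strictly positive (as $a>0$). Then $u$ satisfies $-A\Delta u=g(u)$, and I would undo the dilation by choosing $t:=A^{-1/2}$ and defining $v(\cdot):=u(\cdot/t)$, so that $u(\cdot)=v(t\cdot)$. The computation above shows at once that $-\Delta v=g(v)$, i.e.\ $v$ solves \eqref{eq:bl}. It then remains to check the normalization: using $\irn|\n v|^2=A^{1-N/2}\irn|\n u|^2$ and $t=A^{-1/2}$, the quantity $a t^2+b\,t^{4-N}\irn|\n v|^2$ collapses, after summing the two exponents of $A$, to $A^{-1}\big(a+b\,\irn|\n u|^2\big)=1$, as required.

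There is no genuine analytic obstacle here, since the statement is essentially an identity. The only place demanding care is the bookkeeping of the scaling exponents, in particular the homogeneity $t^{2-N}$ of the Dirichlet integral and the verification that the exponent sum $\tfrac{N-4}{2}+1-\tfrac{N}{2}$ equals $-1$, which is exactly what makes the normalization condition reduce to the definition of $A$. One should also remark that the correspondence $v\leftrightarrow(u,t)$ is essentially bijective, since $t$ is uniquely forced by $u$ in the construction above.
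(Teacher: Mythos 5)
Your proposal is correct and follows essentially the same route as the paper: the ``if'' direction is the same direct computation using $\Delta u(x)=t^{2}(\Delta v)(tx)$ and $\irn|\n u|^2=t^{2-N}\irn|\n v|^2$, and your ``only if'' direction with $t=A^{-1/2}$, $A=a+b\irn|\n u|^2$, is exactly the paper's rescaling $v(\cdot)=u(h\cdot)$ with $h=\sqrt{a+b\irn|\n u|^2}$. The exponent bookkeeping you carry out checks out, so nothing further is needed.
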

\begin{proof}
    We first prove the  ``if'' part. Suppose $v\in C^2(\RN)\cap\D^{1,2}(\RN)$ and $t>0$ are as in the statement of the
    theorem and set $u(\cdot)=v(t\cdot)=v_t(\cdot)\in C^2(\RN)\cap\D^{1,2}(\RN)$. We compute
        \begin{align*}
            -\Delta u (x)&=-\Delta v_t(x)=-t^2\Delta v(tx)\\
            &=t^2 g(v(tx))=t^2g(u(x))=\frac{g(u(x))}{a+bt^{2-N}\int_{\R^N}|\n v|^2}\\
            &=\frac{g(u(x))}{a+b\int_{\R^N}|\n u|^2}.
        \end{align*}
    Now we prove the ``only if'' part. Suppose $u\in C^2(\RN)\cap\D^{1,2}(\RN)$ is a solution of \eqref{eq:k} and
    set $h=\sqrt{a+b\int_{\R^N}|\n u|^2}$, $v(\cdot)=u(h\cdot)=u_h(\cdot)$.
    Of course $v\in C^2(\RN)\cap\D^{1,2}(\RN)$ and $u(\cdot)=v(\frac 1 h \cdot)$. Moreover
        \begin{equation*}
            -\Delta v (x)= -h^2\Delta u (hx)= h^2\frac{g(u(hx))}{a+b\int_{R^N}|\n u|^2}=g(u_h(x))=g(v(x))
        \end{equation*}
    and, if we set $t=\frac 1 h,$
        \begin{equation*}
            t^2 =\frac 1{a+b\int_{\R^N}|\n u|^2}=\frac 1 {a+bt^{2-N}\int_{\R^N}|\n v|^2}.
        \end{equation*}
\end{proof}
    \begin{remark}\label{re:gr}
        Assume ({\bf g1}$\ldots${\bf g4}). By the existence result contained in \cite{BL1}, it is obvious that for $N=3$
        there exists a solution of \eqref{eq:k} for any $a$ and $b$ positive numbers.\\
        For $N=4,$ we should have a solution if and only if there exists $v$
        solution of \eqref{eq:bl} such that $b\irn|\n v|^2<1.$
        Taking into account the computations in \cite[Section 4.3]{BL1}, we know
        that the ground state solution of equation \eqref{eq:bl} has the minimal value of the $\D^{1,2}(\RN)$ norm
        among all the solutions of the equation. Then, for $N=4$ we conclude that  equation \eqref{eq:k}
        has a solution if and only if the ground
        state solution $\bar v$ of \eqref{eq:bl} is such that $b\irn|\n \bar v|^2<1.$
    \end{remark}
Consider the functional of the action related with equation \eqref{eq:k}
    \begin{equation*}
      I(u)= \frac 1 2 \left( a +\frac b 2\irn|\n u|^2\right)\irn|\n u|^2-\irn G(u),
    \end{equation*}
where $G(s)=\int_0^s g(z)\,dz$, being $g$ possibly modified as in
\cite{BL1} in order to make $I$ a $C^1$ functional on $\H.$ We
observe that, for small dimensions, the value of the action computed
in the solutions increases as the $\D^{1,2}(\RN)$ norm increases
according to the following
    \begin{proposition}\label{pr:mon}
        Assume $N=3$ or $N=4$. If $v_1$ and $v_2$ are solutions of \eqref{eq:bl} and $\irn |\n v_1|^2<\irn |\n v_2|^2$ and, for $N=4,$ we also have $b\irn |\n v_2|^2<1,$ then, calling $t_1$ and $t_2$ the positive numbers such that respectively $v_1(t_1\cdot)$ and $v_2(t_2\cdot)$ are solutions of \eqref{eq:k}, we have $t_2<t_1$ and $I(v_1(t_1\cdot))<I(v_2(t_2\cdot))$.
    \end{proposition}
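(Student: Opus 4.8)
The plan is to reparametrize everything by the Dirichlet norm of the \emph{Kirchhoff} solution and to reduce the energy to a manifestly monotone expression. Write $A_i=\irn|\n v_i|^2$ and $B_i=\irn|\n u_i|^2$, where $u_i=v_i(t_i\cdot)$. A change of variables in the integral gives $B_i=t_i^{2-N}A_i$, and with this the defining relation $t_i^2a+t_i^{4-N}bA_i=1$ from Theorem \ref{th:iff} becomes $t_i^2(a+bB_i)=1$, so that $t_i^2=(a+bB_i)^{-1}$ and $A_i=(a+bB_i)^{-(N-2)/2}B_i$. For $N=4$ the defining relation forces $bA_i<1$; the hypothesis $bA_2<1$ together with $A_1<A_2$ guarantees that both $t_1$ and $t_2$ are well defined, consistently with Remark \ref{re:gr}, while for $N=3$ no such restriction is needed.

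First I would prove that $B_1<B_2$. Consider $\Phi(B)=B(a+bB)^{-(N-2)/2}$, so that $A_i=\Phi(B_i)$. A direct differentiation yields $\Phi'(B)=(a+bB)^{-N/2}\big(a+\tfrac{4-N}{2}bB\big)$, whose value is strictly positive for every $B>0$ precisely because $N\le 4$ makes the factor $a+\tfrac{4-N}{2}bB\ge a>0$. Hence $\Phi$ is strictly increasing, and from $A_1<A_2$ we read off $B_1<B_2$. The inequality $t_2<t_1$ then follows at once from $t_i^2=(a+bB_i)^{-1}$, which is strictly decreasing in $B_i$.

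Next I would obtain a closed form for the action at a Kirchhoff solution. If $u$ solves \eqref{eq:k} and $c=a+b\irn|\n u|^2$, then $u$ solves $-\Delta u=g(u)/c$, so the Pohozaev identity for \eqref{eq:bl} applies with nonlinearity $g/c$ and gives $\tfrac{N-2}{2}\irn|\n u|^2=\tfrac{N}{c}\irn G(u)$, that is $\irn G(u)=\tfrac{N-2}{2N}(a+bB)B$ with $B=\irn|\n u|^2$. Substituting this expression into $I$ collapses the two terms into $I(u)=\tfrac{a}{N}B+\tfrac{(4-N)b}{4N}B^2$. For $N\le 4$ both coefficients are nonnegative and the coefficient of $B$ is strictly positive, so $B\mapsto I$ is strictly increasing on $(0,\infty)$; combined with $B_1<B_2$ this yields $I(u_1)<I(u_2)$ and completes the argument.

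The only genuinely delicate point is the use of the Pohozaev identity: it must be invoked for the nonlinearity $g$ possibly modified as in \cite{BL1}, and one should check that a solution $u\in C^2(\RN)\cap\D^{1,2}(\RN)$ of \eqref{eq:k} carries enough decay for the identity to be valid. This, however, is exactly the regularity and integrability framework already established in \cite{BL1} for \eqref{eq:bl}, which transfers verbatim to \eqref{eq:k} through the constant factor $c$. Everything else is elementary calculus once the reparametrization $A_i=\Phi(B_i)$ and the closed form for $I$ are in place.
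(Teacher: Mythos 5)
Your proof is correct and takes essentially the same route as the paper: both arguments rest on the Pohozaev identity for \eqref{eq:k}, which collapses the action at a solution to $I(u)=\frac aN B+\frac{(4-N)b}{4N}B^2$ with $B=\irn|\n u|^2$, followed by elementary monotonicity valid precisely because $N\le 4$. The only cosmetic difference is that you carry out the final comparison in the single variable $B$ (via the increasing map $\Phi$, which also gives $t_2<t_1$), whereas the paper keeps the pair $\bigl(\irn|\n v|^2,\,t\bigr)$ and reads the conclusion off \eqref{eq:func}, each term being increasing in the norm and decreasing in $t$.
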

    \begin{proof}
         By Theorem \ref{th:iff}, it is immediate to see that $t_2<t_1.$  Now observe that any solution of \eqref{eq:k} satisfies the Pohozaev identity
            \begin{equation}
                a\frac{N-2}{2N}\irn |\n u|^2
        + b\frac{N-2}{2N}\left(\irn |\n u|^2\right)^2-\irn G(u)=0.
            \end{equation}
        As a consequence the action computed in any solution of \eqref{eq:k} is
            \begin{equation*}
                I(u)=a\frac 1N\irn|\n u|^2+b\frac{4-N}{4N}\left(\irn|\n u|^2\right)^2
            \end{equation*}
        and then, if $v$ and $t>0$ are related with $u$ as in Theorem \ref{th:iff}, we have that
            \begin{equation*}
                I(u)=a\frac{t^{2-N}}N\irn|\n v|^2+b\frac{4-N}{4N}t^{4-2N}\left(\irn|\n v|^2\right)^2.
            \end{equation*}
        Since $t^2a+t^{4-N}b\int_{\R^N}|\n v|^2=1$, we can cancel the dependence of $I$ from $b$
            \begin{align}\label{eq:func}
                I(u)&=\frac{\irn|\n v|^2}{Nt^N}\left(at^2+\frac{4-N}{4}(1-at^2)\right)\nonumber\\
                &=\frac a4\frac{\irn|\n v|^2}{t^{N-2}}+\frac{4-N}{4N}\frac{\irn|\n v|^2}{t^{N}}.
            \end{align}
        The conclusion easy follows from \eqref{eq:func}.
    \end{proof}
   In the following Corollary we establish the conditions which
   guarantee the existence of a ground state solution for $N\ge 3$.
   In particular we correct Theorem 0.6 in \cite{A} for what concerns the dimension $N=4.$
    \begin{corollary}
        Assume ({\bf g1}$\ldots${\bf g4}).\\
        If $N=3$ then equation \eqref{eq:k} has a ground state solution.\\
        If $N=4$ then equation \eqref{eq:k} has a ground state solution if and only if $b\irn |\n \bar v|^2<1$, being $\bar v$ a ground state solution of \eqref{eq:bl}.\\
        If $N\ge 5$ then equation \eqref{eq:k} has a solution if and only if
            \begin{equation}\label{eq:min}
                a\le\left(\frac{N-4}{N-2}\right)^{\frac{N-2}{N-4}}
                \left(\frac 2{(N-4)b\irn|\n \bar v|^2}\right)^{\frac 2{N-4}},
            \end{equation}
        being $\bar v$ a ground state solution of \eqref{eq:bl}. Moreover the functional attains the infimum.
    \end{corollary}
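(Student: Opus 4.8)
The plan is to reduce the whole statement to the scalar constraint of Theorem \ref{th:iff}. By that theorem a solution of \eqref{eq:k} exists exactly when there is a solution $v$ of \eqref{eq:bl} together with a $t>0$ satisfying
$$f_v(t):=at^2+bt^{4-N}\irn|\n v|^2=1,$$
and by \cite{BL1} at least one such $v$ always exists. Writing $B=\irn|\n v|^2>0$, everything comes down to studying the real function $f_v$ on $(0,\infty)$ and then selecting, among admissible $v$, the one producing the least action. The three dimensional regimes correspond to the sign of $4-N$.

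For $N=3$ one has $4-N=1>0$, so $f_v$ is strictly increasing from $0$ to $+\infty$; hence $f_v(t)=1$ has a unique root for all $a,b>0$ and a solution always exists. For $N=4$ the constraint collapses to $at^2=1-bB$, solvable iff $bB<1$. In both cases Proposition \ref{pr:mon} says the action is strictly increasing in $B$, so the least action is realised by the BL solution of least $\D^{1,2}$ norm, i.e. the ground state $\bar v$, whose minimality is recalled in Remark \ref{re:gr}. This gives the ground state for all $a,b$ when $N=3$; when $N=4$ it exists iff $b\irn|\n\bar v|^2<1$, because every BL solution has norm at least $\irn|\n\bar v|^2$, so the constraint is solvable for some $v$ iff it is solvable for $\bar v$.

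For $N\ge5$ we have $4-N<0$, so $f_v\to+\infty$ at both ends of $(0,\infty)$ and attains a strict minimum at $t_*$ with $t_*^{N-2}=\tfrac{(N-4)bB}{2a}$. Using $f_v'(t_*)=0$ one gets $f_v(t_*)=\tfrac{N-2}{N-4}at_*^2$, and the solvability condition $f_v(t_*)\le1$ rearranges exactly into \eqref{eq:min} with $B$ in place of $\irn|\n\bar v|^2$. Since the right-hand side of \eqref{eq:min} is decreasing in $B$, the condition is weakest for the least norm $B=\irn|\n\bar v|^2$, which proves the ``iff'' and shows no larger-norm BL solution can help.

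The genuinely new point, and the main obstacle, is the final assertion that $I$ attains its infimum when $N\ge5$, since Proposition \ref{pr:mon} no longer applies and each admissible $v$ now yields \emph{two} values of $t$. Here I would change variables to $Y=\irn|\n u|^2=t^{2-N}B$. The constraint becomes $t^2(a+bY)=1$, so that $B=Y(a+bY)^{-(N-2)/2}=:\psi(Y)$, while the Pohozaev expression \eqref{eq:func} turns into the concave parabola $I=\tfrac aN Y-\tfrac{b(N-4)}{4N}Y^2$. One checks that $\psi$ and $I$ share the critical abscissa $Y_*=\tfrac{2a}{b(N-4)}$: $\psi$ rises on $(0,Y_*)$ and then decays to $0$, and $I$ likewise rises then falls. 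The admissible $Y$ are those with $\psi(Y)\ge\irn|\n\bar v|^2$, which (exactly when \eqref{eq:min} holds) confines $Y$ to the bounded interval $[Y_-,Y_+]$, the two roots of $\psi(Y)=\irn|\n\bar v|^2$; hence $I$ is bounded below on the solution set. By concavity $I$ lies above its chord on $[Y_-,Y_+]$, so $I(Y)\ge\min\{I(Y_-),I(Y_+)\}$ there, both endpoints being realised by the least-norm $\bar v$; the infimum therefore equals $\min\{I(Y_-),I(Y_+)\}$ and is attained. The only care needed is to verify that larger-norm BL solutions give $Y$ strictly interior to $[Y_-,Y_+]$, so they never lower this infimum.
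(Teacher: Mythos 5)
Your handling of the existence assertions and of the cases $N=3,4$ is essentially the paper's own argument: Theorem \ref{th:iff} reduces everything to the scalar equation $f_v(t)=1$, the minimality of the ground state norm makes $\bar v$ the decisive test function, and Proposition \ref{pr:mon} settles the ground state claims for $N=3,4$; your computation of $\min f_v$ and its rearrangement into \eqref{eq:min} also matches the paper. Where you genuinely diverge is the final claim for $N\ge5$ that ``the functional attains the infimum''. The paper means the infimum of $I$ over all of $\H$ and proves it by direct methods: with the Berestycki--Lions splitting $g=g_1-g_2$ and the Sobolev embedding one gets $I(u)\ge \frac a2\irn|\n u|^2+\frac b4\left(\irn|\n u|^2\right)^2+\frac{(1-\eps)m}{2}\irn|u|^2-C\left(\irn|\n u|^2\right)^{N/(N-2)}$, which is coercive and bounded below precisely because $1<N/(N-2)<2$ for $N\ge5$; Schwarz symmetrization then reduces the minimization to $\Hr$, where compactness of $u\mapsto\irn G_1(u)$ and Weierstrass give a minimizer. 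That argument uses \eqref{eq:min} nowhere and is what feeds the closing Remark ($I\ge0$ on $\H$ when \eqref{eq:min} fails). You instead minimize $I$ over the \emph{solution set}, parametrizing by $Y=\irn|\n u|^2$ and exploiting that the Pohozaev expression for $I$ is a concave parabola in $Y$ whose vertex abscissa $2a/(b(N-4))$ coincides with the maximum point of $\psi(Y)=Y(a+bY)^{-(N-2)/2}$; your computations are correct, and the conclusion that the least action among nontrivial solutions equals $\min\{I(Y_-),I(Y_+)\}$ and is attained by a rescaling of $\bar v$ is sound (modulo the small imprecision that the admissible $Y$ form a subset of, rather than all of, $[Y_-,Y_+]$, which you address at the end). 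Be aware, though, that the two routes prove slightly different statements: yours yields a genuine least-action nontrivial solution whenever \eqref{eq:min} holds, including the equality case, where every nontrivial solution has action $a^2/(Nb(N-4))>0$ and the paper's global minimizer over $\H$ is therefore $u=0$; the paper's yields unconditional boundedness below and a global minimizer on $\H$, which is the version used afterwards. Neither implies the other, so your proposal should be read as establishing a useful variant of the last sentence rather than the sentence as the author intended it.
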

    \begin{proof}
        Since the functional of the
        action related with equation \eqref{eq:bl}, when computed in the solutions
        of the equation, is directly proportional to the $\D^{1,2}(\RN)$ norm of the
        solutions (see Remark \ref{re:gr}), the conclusion for cases $N=3$ and $N=4$ follows immediately by
        Proposition \ref{pr:mon} and \cite{BL1}.\\
         If $N\ge 5,$ by Theorem
        \ref{th:iff} we have to show that there exists a solution $v$ of equation
        \eqref{eq:bl} and $t>0$ such that $t^2a+t^{4-N}b\int_{\R^N}|\n
        v|^2=1$. Of course such a couple
        $(v,t)$ exists if only if there exists $t_0>0$ such that
            \begin{equation}\label{eq:ex}
                t_0^2a+t_0^{4-N}b\int_{\R^N}|\n\bar
                v|^2=1
            \end{equation}
        for $\bar v$ ground state solution of \eqref{eq:bl}.
        By studying the function $f(t)= at^2+b_0t^{4-N}$ for $t>0$,
        being $b_0=b\int_{\R^N}|\n
        \bar v|^2,$ we observe that \eqref{eq:ex} holds for some $t_0$ if
        and only if
            \begin{equation}\label{eq:min2}
                \min_{t>0}f(t)\le 1.
            \end{equation}
        An easy computation leads
        to \eqref{eq:min}.
        As a remark we point out that if
        \eqref{eq:min2} holds with the strict inequality,
        then we can find two values $t_1<t_2$ which solve \eqref{eq:ex} and two corresponding
        distinct solutions $\bar v_{t_1}$ and
        $\bar v_{t_2}$ to equation \eqref{eq:k}.

        Now we prove that the functional $I$ attains the minimum.
        For $i=1,2$, define $g_i$ and $G_i$  as in \cite{BL1}. Observe
        that, by (3.4) and (3.5) in \cite{BL1},
            \begin{align*}
                I(u)&=\frac 1 2 \left( a +\frac b 2\irn|\n u|^2\right)\irn|\n u|^2+\irn
                G_2(u)-\irn G_1(u)\\
                &\ge \frac 1 2 \left( a +\frac b 2\irn|\n u|^2\right)\irn|\n u|^2+(1-\eps)\irn
                G_2(u)-C_\eps \irn |u|^{\frac{2N}{N-2}}\\
                &\ge \frac 1 2 \left( a +\frac b 2\irn|\n u|^2\right)\irn|\n u|^2+\frac{1-\eps}2m\irn
                |u|^2-C_\eps \irn |u|^{\frac{2N}{N-2}},
            \end{align*}
        where $\eps <1$ and $C_\eps>0$ are suitable constants.\\
        Since
        $\D^{1,2}(\RN)\hookrightarrow L^{\frac{2N}{N-2}}(\RN)$, for
        a suitable positive constant $C$ we have
            \begin{multline*}
                I(u)\ge \frac 1 2 \left( a +\frac b 2\irn|\n u|^2\right)\irn|\n u|^2\\+\frac{1-\eps}2m\irn
                |u|^2-C \left(\irn |\n u|^2\right)^{\frac{N}{N-2}},
            \end{multline*}
         and, since for
        $N\ge 5$ we have $1<\frac{N}{N-2}< 2$, we deduce that
        the functional is bounded below and coercive with respect to
        $H^1(\RN)$ norm.\\
        Now, since for every $u\in\H$ and its corresponding Schwarz
        symmetrization $u^*$ we have
            \begin{equation*}
                \irn |\n u^*|^2\le \irn |\n u|^2,\qquad \irn
                G(u^*)=\irn G(u),
            \end{equation*}
        we can look for a minimizer of $I$ in $H_r^1(\RN)$, the set of
        radial functions in $\H.$ As in
        \cite{BL1}, we can prove that the functional
            \begin{equation*}
                u\in\Hr\mapsto\irn G_1(u)\in\R
            \end{equation*}
        is compact, so, by standard arguments based on Weierstrass Theorem, $I$ attains the infimum.
    \end{proof}
    \begin{remark}
        Suppose $N\ge 5$. By previous Corollary we have that if
        \eqref{eq:min} does not hold, then $I$ is nonnegative in
        $\H.$
    \end{remark}

\end{document}